\definecolor{myca}{rgb}{0.9686,0.5765,0.1176}
\definecolor{mycb}{rgb}{0.4000,0.7059,0.9098}
\definecolor{mycc}{rgb}{0.9765,0.7686,0.6000}
\definecolor{mycd}{rgb}{0.8510,0.5843,0.5608}
\begin{document}
\title{Searching the solution landscape by generalized high-index saddle dynamics}

\author{
Jianyuan Yin\thanks{School of Mathematical Sciences, Peking University, Beijing 100871, China (\email{yinjy@pku.edu.cn}).}
\and
Bing Yu\thanks{School of Mathematical Sciences, Peking University, Beijing 100871, China (\email{yubing93@pku.edu.cn}).}
\and
Lei Zhang\thanks{Beijing International Center for Mathematical Research, Center for Quantitative Biology, Peking University, Beijing 100871, China (\email{zhangl@math.pku.edu.cn}).}
}

\maketitle

\begin{abstract}
  We introduce a generalized numerical algorithm to construct the \emph{solution landscape}, which is a pathway map consisting of all stationary points and their connections.
  Based on the high-index optimization-based shrinking dimer (HiOSD) method for gradient systems, a generalized high-index saddle dynamics (GHiSD) is proposed to compute any-index saddles of dynamical systems.
  Linear stability of the index-$k$ saddle point can be proved for the GHiSD system.
  A combination of the downward search algorithm and the upward search algorithm is applied to systematically construct the solution landscape, which not only provides a powerful and efficient way to compute multiple solutions without tuning initial guesses, but also reveals the relationships between different solutions. 
  Numerical examples, including a three-dimensional example and the phase field model, demonstrate the novel concept of the solution landscape by showing the connected pathway maps.
\end{abstract}

\begin{keywords}
  saddle point, energy landscape, solution landscape, pathway map, dynamical system, phase field
\end{keywords}

\begin{AMS}
  37M05, 49K35, 37N30, 34K28, 65P99
\end{AMS}

\section{\label{sec:intro}Introduction}

The \emph{energy landscape}, which is a mapping of all possible configurations of the system to their energy, exhibits a number of local minima separated by barriers.
The energy landscape has been widely devoted to elucidating the structure and thermodynamics of the energy functions in a broad range of applications, such as protein folding \cite{leeson2000protein, mallamace2016energy, onuchic1997theory, wales2003energy}, catalysis \cite{benkovic2008free, kerns2015energy}, Lennard--Jones clusters \cite{cai2018single, wales1997global}, phase transitions \cite{cheng2010nucleation, du2020variational, han2019transition}, and artificial neural networks \cite{das2016energy, draxler2018essentially, weinan2020a, goodfellow2016deep}.
The index-$1$ saddle point, often characterized as the transition state, is located at the point of the minimal energy barrier between two minima.
The minimum energy path connects two minima and the transition state via a continuous curve on the energy landscape \cite{weinan2002string, Ren2013a}.
Besides minima and transition states, the stationary points of the energy function also include high-index saddle points.
To characterize the nature of a nondegenerate saddle point, the Morse index of a saddle point is the maximal dimension of a subspace on which its Hessian is negative definite \cite{milnor1963morse}.
An intriguing mathematical-physics problem is to efficiently search all stationary points of a multivariable energy function, including both minima and saddle points \cite{ hughes2014an, mehta2011finding}.
A large number of numerical methods have been proposed to compute the minima and index-$1$ saddle points on complicated energy landscapes in the recent two decades \cite{du2009constrained, vanden2010transition, henkelman2002methods, yu2020global, zhang2016optimization, zhang2016recent}.

Different from the energy landscape, we introduce a novel and more informative concept of the \emph{solution landscape}, which is a pathway map consisting of all stationary points and their connections.
The solution landscape can present how minima are connected with index-1 saddle points, and how lower-index saddle points are connected to higher-index ones, finally to a parent state, an index-$k$ saddle point ($k$-saddle), as shown in \cref{fig1}(A).
We illustrate the solution landscape with a two-dimensional example,
\begin{equation}\label{2dE}
E(x, y) = (x^2-1)^2 + (y^2-1)^2.
\end{equation}
According to Morse indices, the solutions to $\nabla E(x, y)= \bm 0$ are classified as: four minima $(\pm 1, \pm 1)$, four 1-saddles $(\pm 1, 0) $ and $(0, \pm 1)$, and one maximum $(0,0)$.
From the maximum (2-saddle), the solution landscape can be constructed down to four minima by following unstable directions.
Specifically, four 1-saddles can found from the 2-saddle along its two unstable eigenvectors of the Hessian, and each 1-saddle connects two minima by following the gradient flows from both sides of the unstable eigenvector, as shown in \cref{fig1}(B).

\begin{figure}[h]
	\centering
	\includegraphics[width=.6\columnwidth]{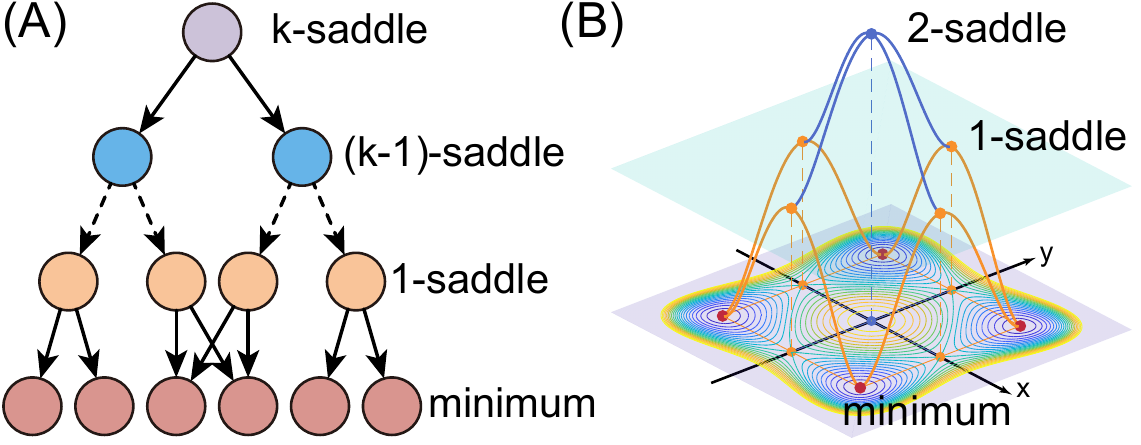}\\
	\caption{(A) A diagram of the solution landscape.
		(B) The solution landscape of the function \cref{2dE}. }\label{fig1}
\end{figure}

The above definition of the solution landscape is intuitive, while it is challenging to define and construct the solution landscape numerically.
Multiple stationary points can be found by numerically solving the nonlinear equations $\nabla E(\bm x)=\bm 0$ with extensive algorithms, such as homotopy methods \cite{chen2004search, hao2014bootstrapping, mehta2011finding} and deflation techniques \cite{brow1971deflation, farrell2015deflation}.
However, as more solutions are discovered, it becomes increasingly difficult to tune fine initial guesses for the remaining solutions and determine whether all solutions have been found.
Moreover, the relationships between all solutions are not portrayed with these methods.
Recently, Yin et al. proposed a numerical procedure to construct a pathway map on the energy landscape based on the high-index optimization-based shrinking dimer (HiOSD) method \cite{yin2020construction, yin2019high}.
A downward search enables to construct the pathway map from a $k$-saddle to the connected lower-index saddles, and the HiOSD method also embeds an upward search with a selected direction to find a higher-index saddle, so the entire search can navigate up and down on the energy landscape.

Generally, a dynamical system, $\dot{\bm x}=\bm F(\bm x)$, is an evolution rule that defines a trajectory as a function of time on a set of states (the phase space) \cite{meiss2007differential}.
It has been widely applied in depicting the motions of physics, chemistry and biology, such as kinetic equations \cite{chen2003extended, shakhov1968generalization}, Navier-Stokes equations \cite{lemarie2002recent, temam2001navier} and biochemical reactions \cite{chen2010classic, nie2020noise, qiao2019network}.
Similar to gradient systems, an index-$k$ saddle point is a stationary point where the Jacobian has exactly $k$ eigenvalues with a positive real part \cite{wiggins2003introduction}.
Although the energy landscape no longer exists in general dynamical systems, the solution landscape is generic and valid in both gradient and non-gradient systems.
However, the original procedure presented cannot deal with non-gradient systems and needs to be generalized.

In this paper, we introduce a generalized numerical method to construct the solution landscape for a dynamical system.
A generalized high-index saddle dynamics (GHiSD) method is proposed to find high-index saddles of non-gradient systems and linear stability of the index-$k$ saddle point is proved for the GHiSD system.
Based on the GHiSD method, we develop a systematic approach by a combination of the downward search and the upward search to efficiently construct the solution landscape, starting from a high-index saddle and ending with multiple sinks.
We apply a three-dimensional example and the phase field model as numerical examples to demonstrate the novel concept of the solution landscape.

\section{GHiSD Methods}\label{sec:method}
\subsection{Dynamical systems}
Given an autonomous dynamical system \cite{wiggins2003introduction}
\begin{equation}\label{dynamical}
\dot{\bm x}=\bm F(\bm x), \quad \bm x\in \mathbb{R}^n,
\end{equation}
where $\bm F: \mathbb{R}^n\rightarrow\mathbb{R}^n$ is a $\mathcal{C}^r(r \geqslant 2)$ function, a point ${\bm x}^*\in\mathbb{R}^n$ is called a \emph{stationary point} (or \emph{equilibrium solution}) of \cref{dynamical} if $\bm F(\bm x^*)=\bm 0$.
Let $J(\bm x)=\nabla \bm F(\bm x)$ denote the Jacobian of $\bm F(\bm x)$, and $\langle \cdot,\cdot\rangle$ denote an inner product on $\mathbb{R}^n$, i.e. $\langle \bm x,\bm y\rangle= \bm y^\top \bm x$.

For a stationary point $\bm x^\ast$, taking $\bm x=\bm x^*+\bm y$ in \cref{dynamical}, we have
\begin{equation}\label{doty}
\dot{\bm y}=J(\bm x^*)\bm y+\mathcal{O}(\|\bm y\|^2),
\end{equation}
where $\|\cdot\|$ denotes the norm induced by the inner product.
The associated linear system
\begin{equation}\label{linear}
\dot{\bm y}=J(\bm x^*)\bm y
\end{equation}
is used to determine the stability of $\bm x^*$.
Let $\{\bm w_1, \cdots, \bm w_{k_{\mathrm{u}}}\}$, $\{\bm w_{k_{\mathrm{u}}+1}, \cdots, \bm w_{k_{\mathrm{u}}+k_{\mathrm{s}}}\}$ and $\{\bm w_{k_{\mathrm{u}}+k_{\mathrm{s}}+1}, \cdots, \bm w_{k_{\mathrm{u}}+k_{\mathrm{s}}+k_{\mathrm{c}}}\}\subset\mathbb{C}^n$ denote the (generalized) right eigenvectors of $J(\bm x)$ corresponding to the eigenvalues of $J(\bm x)$ with positive, negative and zero real parts, respectively, where $k_{\mathrm{u}}+k_{\mathrm{s}}+k_{\mathrm{c}}=n$.
With these eigenvectors, we define unstable, stable and center subspaces of the Jacobian $J(\bm x)$, respectively, as $\mathcal{W}^{\mathrm{u}}(\bm x)$, $\mathcal{W}^{\mathrm{s}}(\bm x)$ and $\mathcal{W}^{\mathrm{c}}(\bm x)$:
\begin{equation}\label{suc}
\begin{aligned}
\mathcal{W}^{\mathrm{u}}(x) &= \mathrm{span}_{\mathbb{C}}
\{\bm w_1,\cdots,\bm w_{k_{\mathrm{u}}}\}\cap\mathbb{R}^n, \\
\mathcal{W}^{\mathrm{s}}(x) &= \mathrm{span}_{\mathbb{C}}
\{\bm w_{k_{\mathrm{u}}+1},\cdots,\bm w_{k_{\mathrm{u}}+k_{\mathrm{s}}}\}\cap\mathbb{R}^n, \\
\mathcal{W}^{\mathrm{c}}(x) &= \mathrm{span}_{\mathbb{C}}
\{\bm w_{k_{\mathrm{u}}+k_{\mathrm{s}}+1},\cdots,\bm w_{k_{\mathrm{u}}+k_{\mathrm{s}}+k_{\mathrm{c}}}\}\cap\mathbb{R}^n.\\
\end{aligned}
\end{equation}
From the primary decomposition theorem \cite{hirsch1974differential}, $\mathbb{R}^n$ can be decomposed as a direct sum:
\begin{equation}\label{RnM}
\mathbb{R}^n=
\mathcal{W}^{\mathrm{u}}(\bm x) \oplus
\mathcal{W}^{\mathrm{s}}(\bm x) \oplus
\mathcal{W}^{\mathrm{c}}(\bm x).
\end{equation}
By treating $J(\bm x^\ast)$ as a linear operator on $\mathbb{R}^n$, $\mathcal{W}^{\mathrm{u}}(\bm x^*)$, $\mathcal{W}^{\mathrm{s}}(\bm x^*)$ and $\mathcal{W}^{\mathrm{c}}(\bm x^*)$ are the invariant subspaces of the linear system \cref{linear}.

The nonlinear system \cref{doty} possesses
a $k_{\mathrm{u}}$-dimensional local invariant unstable manifold $\mathcal{M}^{\mathrm{u}}_{\mathrm{loc}}(\bm x^*)$,
a $k_{\mathrm{s}}$-dimensional local invariant stable manifold $\mathcal{M}^{\mathrm{s}}_{\mathrm{loc}}(\bm x^*)$, and
a $k_{\mathrm{c}}$-dimensional local invariant center manifold $\mathcal{M}^{\mathrm{c}}_{\mathrm{loc}}(\bm x^*)$ near the origin.
They are intersecting and tangent to the respective invariant subspaces $\mathcal{W}^{\mathrm{u}}(\bm x^*)$, $\mathcal{W}^{\mathrm{s}}(\bm x^*)$, $\mathcal{W}^{\mathrm{c}}(\bm x^*)$ \cite{wiggins2003introduction}.
``Local'' here refers to that these manifolds with boundaries are defined only in a neighborhood of $\bm x^*$.
The invariant subspaces of the linear system \cref{linear} can be used to study the stationary point of the nonlinear system \cref{doty}.

A stationary point $\bm x^*$ of \cref{dynamical} is called \emph{hyperbolic} if no eigenvalues of $J(x^*)$ have zero real part, which means $\mathcal{W}^{\mathrm{c}}(\bm x^*)=\{\bm 0\}$ and the decomposition \cref{RnM} can be rewritten as
\begin{equation}\label{RnE2}
\mathbb{R}^n=\mathcal{W}^{\mathrm{u}}(\bm x^*) \oplus \mathcal{W}^{\mathrm{s}}(\bm x^*).
\end{equation}
A hyperbolic stationary point is called a \emph{saddle} if $\mathcal{W}^{\mathrm{u}}(\bm x^*)$ and $\mathcal{W}^{\mathrm{s}}(\bm x^*)$ are nontrivial.
The hyperbolic stationary point $x^*$ is called a \emph{sink} (\emph{source}) if all the eigenvalues of $J(\bm x^*)$ have negative (positive) real parts.
The \emph{index} of a stationary point $\bm x^\ast$ is defined as the dimension of the unstable subspace $\mathcal{W}^{\mathrm{u}}(\bm x^*)$ \cite{wiggins2003introduction}.

\subsection{Review of HiOSD method for gradient systems}
The HiOSD method is designed for finding index-$k$ saddles of an energy function $E(\bm x)$ \cite{yin2019high}.
For gradient systems $\bm F(\bm x)=-\nabla E(\bm x)$, the Jacobian $J(\bm x)=-\nabla^2 E(\bm x)=-G(\bm x)$ is self-adjoint with all eigenvalues real, where $G(\bm x)$ denotes the Hessian of $E(\bm x)$.
The hyperbolic $k$-saddle $\bm x^*$ is a local maximum on the linear manifold $\bm x^*+\mathcal{W}^{\mathrm{u}}(\bm x^*)$ and a local minimum on $\bm x^*+ \mathcal{W}^{\mathrm{s}}(\bm x^*)$.

Regardless of the dimer approximation temporarily, the high-index saddle dynamics (HiSD) for a $k$-saddle ($k$-HiSD) has the following form:
\begin{subequations}\label{HiOSD1}
	\begin{empheq}[left=\empheqlbrace]{align}
	\dot{\bm x}   & = \left(I-2\sum_{j=1}^{k}\bm v_j \bm v_j^\top\right) \bm F(\bm x), \label{HiOSD1x}\\
	\dot{\bm v}_i & = -\left(I-\bm v_i\bm v_i^\top-2\sum_{j=1}^{i-1}\bm v_j \bm v_j^\top\right)G(\bm x)\bm v_i,\; i=1,\cdots,k,\label{HiOSD1v}
	\end{empheq}
\end{subequations}
which involves a position variable $\bm x$ and $k$ direction variables $\bm v_i$.
The dynamics \cref{HiOSD1} is coupled with an initial condition:
\begin{equation}\label{inits}
\bm x(0)= \bm x^{(0)}\in\mathbb{R}^n, \quad \bm v_i(0)=\bm v_i^{(0)}\in\mathbb{R}^n, \qquad \mathrm{s.t.} \quad \left\langle \bm v_j^{(0)}, \bm v_i^{(0)}\right\rangle=\delta_{ij}, \quad i,j=1,\cdots,k.
\end{equation}
The dynamics \cref{HiOSD1x} represents a transformed gradient flow:
\begin{equation}\label{minimax_dynamic2}
\dot{\bm x}
= -\mathcal{P}_{\mathcal{W}^{\mathrm{u}}(\bm x)}\bm F(\bm x)+
\left(\bm F(\bm x)-\mathcal{P}_{\mathcal{W}^{\mathrm{u}}(\bm x)}\bm F(\bm x)\right)
= \left(I-2\mathcal{P}_{\mathcal{W}^{\mathrm{u}}(\bm x)}\right)\bm F\bm (x),
\end{equation}
where $\mathcal{P}_{\mathcal{V}}$ denotes the orthogonal projection operator on a finite-dimensional subspace $\mathcal{V}$. Here, $-\mathcal{P}_{\mathcal{W}^{\mathrm{u}}(\bm x)}\bm F(\bm x)$ is taken as an ascent direction on the subspace $\mathcal{W}^{\mathrm{u}}(\bm x)$ and $\bm F(\bm x)-\mathcal{P}_{\mathcal{W}^{\mathrm{u}}(\bm x)}\bm F(\bm x)$ is a descent direction on the subspace $\mathcal{W}^{\mathrm{s}}(\bm x)$.
The dynamics \cref{HiOSD1v} finds an orthonormal basis of $\mathcal{W}^{\mathrm{u}}(\bm x)$.
Thanks to the self-adjoint Jacobian $J(\bm x)$, we can simply take $\bm v_i$ as a unit eigenvector corresponding to the $i$th smallest eigenvalue of $G(\bm x)$.
Therefore, the $i$th eigenvector $\bm v_i$ can be obtained by a constrained optimization problem with the knowledge of $\bm v_1,\cdots,\bm v_{i-1}$:
\begin{equation}\label{minvi}
\min_{\bm v_i\in\mathbb{R}^n} \quad
\langle G(\bm x)\bm v_i, \bm v_i\rangle,\qquad
\mathrm{s.t.} \quad
\langle \bm v_j,\bm v_i\rangle=\delta_{ij},\quad j=1,2,\cdots,i.
\end{equation}
Then the $k$ Rayleigh quotients \cref{minvi} are minimized simultaneously using the gradient flow of $\bm v_i$:
\begin{equation}\label{dotv2}
\dot{\bm v}_i = -G(\bm x)\bm v_i +
\langle G(\bm x)\bm v_i, \bm v_i\rangle \bm v_i
+2\sum_{j=1}^{i-1}\left\langle G(\bm x)\bm v_i, \bm v_j\right\rangle \bm v_j, \quad i=1,\cdots,k,
\end{equation}
as the dynamics \cref{HiOSD1v}.

\subsection{GHiSD for non-gradient systems}
The GHiSD for a $k$-saddle ($k$-GHiSD) of the dynamical system \cref{dynamical} has the following form:
\begin{subequations}\label{HiOSD2}
	\begin{empheq}[left=\empheqlbrace]{align}
	\dot{\bm x}   & = \left(I-2\sum_{j=1}^{k}\bm v_j \bm v_j^\top\right) \bm F(\bm x), \label{HiOSD2x}\\
	\dot{\bm v}_i & = \left(I-\bm v_i \bm v_i^\top\right)J(\bm x)\bm v_i
	-\sum_{j=1}^{i-1} \bm v_j \bm v_j^\top \Big(J(\bm x)+J^\top(\bm x)\Big)\bm v_i,
	\quad i=1,\cdots,k,\label{HiOSD2v}
	\end{empheq}
\end{subequations}
with an initial condition \cref{inits}.

The dynamics \cref{HiOSD2x} is the same as the dynamics \cref{HiOSD1x} of HiSD.
For non-gradient systems, the Jacobian $J(\bm x)$ is not self-adjoint, and the eigenvectors may not be orthogonal.
The loss of orthogonality between $\mathcal{W}^{\mathrm{u}}(\bm x)$ and $\mathcal{W}^{\mathrm{s}}(\bm x)$ indicates that it may be hard to decompose $\bm F(\bm x)$ according to the direct sum \cref{RnE2}.
For this reason, the gentlest ascent dynamics (GAD) for searching $1$-saddles in non-gradient systems requires two direction variables to approximate the left and right eigenvectors corresponding to the largest eigenvalue of $J(\bm x)$ \cite{weinan2011gentlest}.
Nevertheless, Gu and Zhou later proposed a simplified GAD in non-gradient systems that uses only one direction variable and proved that its stable critical point corresponds to a $1$-saddle of the original system \cite{gu2018simplified}.
Thus, we adopt the similar idea that the dynamics \cref{minimax_dynamic2} could be used to find high-index saddles of non-gradient systems and the HiSD \cref{HiOSD1x} is preserved in GHiSD \cref{HiOSD2x}, as long as $\bm v_1,\cdots,\bm v_k$ approximate an orthonormal basis of the subspace $\mathcal{W}^{\mathrm{u}}(\bm x)$.

The dynamics \cref{HiOSD2v} aims to find an orthonormal basis $\bm v_1,\cdots,\bm v_k$ of the subspace $\mathcal{W}^{\mathrm{u}}(\bm x)$.
The dynamical flow
\begin{equation}\label{dotvi2}
\dot{\bm v}_i = J(\bm x)\bm v_i + \sum_{j=1}^{i}\xi_{ij} \bm v_j,\quad i=1,\cdots,k,
\end{equation}
should preserve the orthonormal condition
\begin{equation}\label{orthonormal2}
\langle \bm v_j, \bm v_i\rangle=\delta_{ij}, \quad i,j=1,\cdots,k,
\end{equation}
which indicates
\begin{equation}\label{xi_j2}
\xi_{ii} = -\langle J(\bm x)\bm v_i, \bm v_i\rangle; \quad
\xi_{ij} = -\langle J(\bm x)\bm v_i, \bm v_j\rangle-
\langle \bm v_i, J(\bm x)\bm v_j \rangle,
\quad j=1,\cdots,i-1.
\end{equation}
Thus, the dynamics \cref{HiOSD2v} is derived from \cref{dotvi2} and \cref{xi_j2}, which also accords with \cref{HiOSD1v} under the condition $J(\bm x)=J^\top(\bm x)$.

Next, we show the following theorem for linear stability for the $k$-GHiSD \cref{HiOSD2} under some assumptions.

\begin{theorem}\label{them1}
	Assume that $\bm F(\bm x)$ is $\mathcal{C}^2$, $\bm x^*\in\mathbb{R}^n$,
	and $\{\bm v^*_i\}^k_{i=1}\subset \mathbb{R}^n$ satisfying the orthonormal condition \cref{orthonormal2}.
	The eigenvalues of the Jacobian $J^*=\nabla \bm F(\bm x^*)$ are $\lambda_1>\cdots>\lambda_k>\lambda_{k+1}\geqslant\cdots\geqslant\lambda_n$, real and nonzero,
	and the corresponding right eigenvector of $\lambda_j$ is $\bm w_j\in \mathbb{R}^n$.
	Then $(\bm x^*,\bm v_1^*,\bm v_2^*,\cdots,\bm v_k^*)$ is a linearly stable stationary point of the dynamics \cref{HiOSD2},
	if and only if $\bm x^*$ is a $k$-saddle of the dynamical system \cref{dynamical},
	and $\{\bm v^*_j\}^i_{j=1}$ is an orthonormal basis of $\mathrm{span}\{\bm w_1, \cdots, \bm w_i\}$ for $i=1,2,\cdots,k$.
\end{theorem}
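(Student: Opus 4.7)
My plan is to compute the Jacobian of the $k$-GHiSD system \cref{HiOSD2} at the candidate equilibrium $(\bm x^*,\bm v_1^*,\ldots,\bm v_k^*)$ and show that its spectrum lies strictly in the open left half plane precisely under the two conditions of the theorem. First I would verify that the candidate is an equilibrium: $\bm F(\bm x^*)=\bm 0$ makes the right-hand side of \cref{HiOSD2x} vanish, and the $J^*$-invariance of $V_i:=\mathrm{span}\{\bm v_1^*,\ldots,\bm v_i^*\}=\mathrm{span}\{\bm w_1,\ldots,\bm w_i\}$ forces $J^*\bm v_i^*\in V_i$, which together with the orthonormality of the $\bm v_j^*$ makes the right-hand side of \cref{HiOSD2v} vanish termwise.

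The proof rests on two successive block triangular reductions. Since $\bm F(\bm x^*)=\bm 0$, every summand of $\partial\dot{\bm x}/\partial\bm v_j|_*$ carries a vanishing factor, so the full Jacobian is block lower triangular with diagonal blocks $A_{xx}=(I-2P^*_U)J^*$ (where $P^*_U$ is the orthogonal projection onto $U=V_k$) and $A_{vv}=\partial\dot{\bm v}/\partial\bm v|_*$. Because the right-hand side of \cref{HiOSD2v} only involves $\bm v_1,\ldots,\bm v_i$, the block $A_{vv}$ is itself block lower triangular with diagonal blocks $B_{ii}=\partial\dot{\bm v}_i/\partial\bm v_i|_*$, so the spectrum of the whole Jacobian is the union of $\mathrm{spec}(A_{xx})$ and the $\mathrm{spec}(B_{ii})$ for $i=1,\ldots,k$.

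For $A_{xx}$, the $J^*$-invariance of $U$ puts $J^*$ in block upper triangular form on $\mathbb{R}^n=U\oplus U^\perp$ with diagonal blocks carrying the eigenvalues $\{\lambda_1,\ldots,\lambda_k\}$ and $\{\lambda_{k+1},\ldots,\lambda_n\}$; then $I-2P^*_U$ negates the $U$-block, giving $\mathrm{spec}(A_{xx})=\{-\lambda_1,\ldots,-\lambda_k,\lambda_{k+1},\ldots,\lambda_n\}$, which is stable iff $\bm x^*$ is a $k$-saddle. For each $B_{ii}$, I would extend $\bm v_1^*,\ldots,\bm v_k^*$ to a full orthonormal basis of $\mathbb{R}^n$ by Gram--Schmidt on $\bm w_1,\ldots,\bm w_n$; in this basis $J^*$ is upper triangular with diagonal $\lambda_1,\ldots,\lambda_n$, and a direct linearization of \cref{HiOSD2v} splits $B_{ii}$ into a \emph{normal} lower-triangular block on $V_i$ with diagonal entries $-2\lambda_i$ and $-\lambda_m-\lambda_i$ for $m<i$, and a \emph{tangential} upper-triangular block on $V_i^\perp$ with diagonal entries $\lambda_m-\lambda_i$ for $m>i$. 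Both pieces are stable under the hypothesis that $\bm x^*$ is a $k$-saddle together with the strict ordering of the top eigenvalues.

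The converse runs the same computation backward. Linear stability first forces the equilibrium equations, which reduce to $\bm F(\bm x^*)=\bm 0$ (as $I-2P^*_U$ is an orthogonal reflection and hence invertible) and, inductively on $i$, to $V_i$ being $J^*$-invariant; hence $V_i=\mathrm{span}\{\bm w_{p_1},\ldots,\bm w_{p_i}\}$ for some tuple $(p_1,\ldots,p_k)$ of distinct eigenvalue indices, with $\bm v_i^*$ the Gram--Schmidt orthonormalization of $\bm w_{p_i}$ against $V_{i-1}$. Redoing the eigenvalue inventory with $\lambda_{p_j}$ in place of $\lambda_j$, stability of $A_{xx}$ pins $\{p_1,\ldots,p_k\}=\{1,\ldots,k\}$ as a set (so $\bm x^*$ has index $k$), and stability of the tangential block of each $B_{ii}$ forces $\lambda_{p_i}>\lambda_{p_m}$ for all $m>i$, which by the strict ordering $\lambda_1>\cdots>\lambda_k>\lambda_{k+1}$ pins down $p_i=i$. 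The main obstacle I anticipate is the $B_{ii}$ calculation itself: writing $J^*$ upper triangularly in the Gram--Schmidt basis and carefully decomposing perturbations into normal and tangential directions is where the bookkeeping gets heavy, but once set up correctly the triangular structure makes the eigenvalue list immediate.
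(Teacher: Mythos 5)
Your proposal is correct and follows essentially the same route as the paper's proof: verify stationarity, exploit the block lower-triangular structure of the full Jacobian once $\bm F(\bm x^*)=\bm 0$, diagonalize $\mathbb{J}_{xx}$ and each $\mathbb{J}_{ii}$ in an orthonormal basis that upper-triangularizes $J^*$, and read off the same eigenvalue lists $\{-\lambda_1,\dots,-\lambda_k,\lambda_{k+1},\dots,\lambda_n\}$ and $\{-\lambda_m-\lambda_i\}_{m\leqslant i}\cup\{\lambda_m-\lambda_i\}_{m>i}$, with the converse recovered by running stationarity and stability backward. The only cosmetic difference is that you build the tail of the basis by Gram--Schmidt on the eigenvectors, whereas the paper uses a real Schur decomposition of the quotient block, which is slightly more robust when $\lambda_{k+1},\dots,\lambda_n$ repeat.
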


\begin{proof}
	The Jacobian of the dynamics \cref{HiOSD2} is
	\begin{equation}\label{J_all}
	\mathbb{J}(\bm x, \bm v_1,\cdots, \bm v_k)=
	\dfrac{\partial(\dot{\bm x},\dot{\bm v}_1,\cdots,\dot{\bm v}_k)}{\partial(\bm x,\bm v_1,\cdots,\bm v_k)} =
	\begin{bmatrix}
	\mathbb{J}_{xx} & \mathbb{J}_{x1} & \mathbb{J}_{x2} & \mathbb{J}_{x3} & \cdots & \mathbb{J}_{xk} \\
	*               & \mathbb{J}_{11} & O               & O               & \cdots & O               \\
	*               & *               & \mathbb{J}_{22} & O               & \cdots & O               \\
	\vdots          & \vdots          & \vdots          & \vdots          &        & \vdots          \\
	*               & *               & *               & *               & \cdots & \mathbb{J}_{kk}
	\end{bmatrix},
	\end{equation}
	whose blocks have following expressions:
	\begin{equation}\label{J_parts}
	\begin{aligned}
	\mathbb{J}_{xx} & = \dfrac{\partial \dot{\bm x}}{\partial \bm x}
	= \left(I-\sum_{j=1}^{k}2\bm v_j \bm v_j^\top\right) J(\bm x),\\
	\mathbb{J}_{xi} & = \dfrac{\partial \dot{\bm x}}{\partial \bm v_i}
	= -2\left(\bm v_i \bm F(\bm x)^\top+\langle \bm F(\bm x), \bm v_i \rangle I\right),\\
	\mathbb{J}_{ii} & = \dfrac{\partial \dot{\bm v}_i}{\partial \bm v_i}
	= J(\bm x)-\left\langle J(\bm x)\bm v_i, \bm v_i\right\rangle I
	-\sum_{j=1}^{i}\bm v_j \bm v_j^\top\left(J(x)+J^\top(x)\right). \\
	\end{aligned}
	\end{equation}
	In the following, $\mathbb{J}(\bm x^*,\bm v_1^*,\bm v_2^*,\cdots,\bm v_k^*)$ is denoted as $\mathbb{J}^*$, and the blocks of $\mathbb{J}^*$ are denoted as $\mathbb{J}^*$ with corresponding subscripts.
	
	``$\Leftarrow$'':
	Because $\{\bm v^*_j\}^i_{j=1}$ is an orthonormal basis of $\mathrm{span}\{\bm w_1, \cdots, \bm w_i\}$ for $i=1,\cdots,k$, the matrix $A_k=\left[\bm v_1^*,\cdots,\bm v_k^*\right]^\top J^* \left[\bm v_1^*,\cdots,\bm v_k^* \right]$ is upper triangular.
	We first extend $\{\bm v^*_j\}^k_{j=1}$ to an orthonormal basis $\{\bm v^*_1,\cdots,\bm v^*_k,\tilde{\bm v}_{k+1},\cdots,\tilde{\bm v}_n\}$ of $\mathbb{R}^n$.
	With $\tilde{V}=\begin{bmatrix} \bm v^*_1,\cdots,\bm v^*_k,\tilde{\bm v}_{k+1},\cdots,\tilde{\bm v}_n\end{bmatrix}$, the matrix $\tilde{V}^\top J^*\tilde{V}$ can be blocked as $\begin{bmatrix} A_k & * \\ O & \tilde{B}_k \\ \end{bmatrix}$, and the eigenvalues of $\tilde{B}_k\in \mathbb{R}^{(n-k)\times(n-k)}$ are $\{\lambda_{k+1},\cdots,\lambda_{n}\}\subset \mathbb{R}$.
	From the real Schur decomposition theorem \cite{golub2012matrix}, there exists an orthogonal matrix $Q$ so that $Q^\top\tilde{B}_kQ$ is upper triangular with real diagonal entries $\{\lambda_{k+1},\cdots,\lambda_{n}\}$.
	With $[\bm v^*_{k+1},\cdots,\bm v^*_n]=[\tilde{\bm v}_{k+1},\cdots,\tilde{\bm v}_n]Q$, the matrix $A=(a_{ij})={V^*}^\top J^*{V^*}$ is an $n$-by-$n$ real upper triangular matrix with diagonal entries $a_{ii}=\lambda_i$, where $\{\bm v^*_i\}^n_{i=1}$ is an orthonormal basis of $\mathbb{R}^n$ and $V^*=\begin{bmatrix} \bm v_1^*,\cdots,\bm v_n^*\end{bmatrix}$.
	For $i=1,2,\cdots,k$, the matrix $A$ can be blocked as $A=\begin{bmatrix} A_i & C_i \\ O & B_i \\ \end{bmatrix}$, where $A_i\in\mathbb{R}^{i\times i}$ and $B_i\in\mathbb{R}^{(n-i)\times (n-i)}$ are both upper triangular with diagonal entries $\lambda_1,\cdots,\lambda_{i}$ and $\lambda_{i+1},\cdots,\lambda_{n}$ respectively.

	Since $\bm x^*$ is a $k$-saddle of the dynamical system \cref{dynamical}, we have $\bm F(\bm x^*)=\bm 0$ and $\lambda_k>0>\lambda_{k+1}$.
	Calculating \cref{HiOSD2} at $(\bm x^*,\bm v_1^*,\cdots,\bm v_k^*)$, we have $\dot{\bm x}=\bm 0$ and
	\begin{equation}\label{dotvi}
	\dot{\bm v}_i
	= \left(I-{\bm v_i^*}{\bm v_i^*}^\top \right)\sum_{j=1}^{i}a_{ji}\bm v_j^*
	-\sum_{j=1}^{i-1}(a_{ij}+a_{ji}){\bm v_j^*}=\bm 0, \quad i=1,\cdots, k,
	\end{equation}
	so $(\bm x^*,\bm v_1^*,\cdots,\bm v_k^*)$ is a stationary point of the dynamics \cref{HiOSD2}.
	Since $\bm F(\bm x^*)=\bm 0$ also indicates that $\mathbb{J}_{xi}^*$ is null and $\mathbb{J}^*$ is block lower triangular, so the eigenvalues of $\mathbb{J}^*$ are determined by $\mathbb{J}^*_{xx}$ and $\mathbb{J}^*_{ii}$.
	From
	\begin{equation}\label{VTGxxV}
	{V^*}^\top \mathbb{J}^*_{xx} V^*
	= {V^*}^\top \Big(I-2\sum_{j=1}^{k}\bm v^*_j {\bm v^*_j}^\top\Big) V^* A
	= \begin{bmatrix} -I_k & O \\ O & I_{n-k} \\ \end{bmatrix} A
	= \begin{bmatrix} -A_k & -C_k \\ O & B_k \\ \end{bmatrix},
	\end{equation}
	the eigenvalues of $\mathbb{J}^*_{xx}$ are $-\lambda_{1},\cdots,-\lambda_{k},\lambda_{k+1},\cdots,\lambda_{n}$, all negative.
	Similarly, from
	\begin{equation}\label{VTGV}
	\begin{aligned}
	{V^*}^\top \mathbb{J}^*_{ii} V^*
	&={V^*}^\top \left(J^*-\lambda_i I-\sum_{j=1}^{i}\bm v^*_j {\bm v^*_j}^\top\left(J^*+{J^*}^\top\right)\right)V^*\\
	&= A-\lambda_i I-{V^*}^\top\left(\sum_{j=1}^{i}\bm v^*_j {\bm v^*_j}^\top\right) V^* \left(A+A^\top\right) \\
	&= A-\lambda_i I-
	\begin{bmatrix} I_i & O \\ O & O \\ \end{bmatrix}
	\begin{bmatrix} A_i+ A_i^\top & C_i \\ C_i^\top  & B_i+ B_i^\top\\ \end{bmatrix}
	=\begin{bmatrix} -A_i^\top & O \\ O & B_i\\ \end{bmatrix}-\lambda_i I,
	\end{aligned}
	\end{equation}
	the eigenvalues of $\mathbb{J}^*_{ii}$ are $-\lambda_{1}-\lambda_{i}, \cdots, -\lambda_{i-1}-\lambda_{i}, -2\lambda_{i}, \lambda_{i+1}-\lambda_{i}, \cdots, \lambda_{n}-\lambda_{i}$, all negative as well.
	Since all the eigenvalues of $\mathbb{J}^*$ are negative, $(\bm x^*,\bm v_1^*,\cdots,$ $\bm v_k^*)$ is a linearly stable stationary point of the dynamics \cref{HiOSD2}.

	``$\Rightarrow$'':
	Since $(\bm x^*,\bm v_1^*,\cdots,\bm v_k^*)$ is a stationary point of the dynamics \cref{HiOSD2}, we have
	\begin{equation}\label{dotx}
	\bm 0=\dot{\bm x}=\left(I-2\sum_{i=1}^{k}\bm v^*_i {\bm v^*_i}^\top\right)\bm F(\bm x^*).
	\end{equation}
	Then we have $\bm F(\bm x^*)=\bm 0$, indicating that $x^*$ is a stationary point of the dynamical system \cref{dynamical} and $\mathbb{J}^*$ is block lower triangular.
	With $a_{ij}=\left\langle J^*\bm v_j^*, \bm v_i^*\right\rangle$, we obtain
	\begin{equation}\label{Jvt}
	\bm 0=\dot{\bm v}_i=J^*\bm v_i^*-a_{ii}\bm v^*_i-\sum_{j=1}^{i-1} (a_{ij}+a_{ji})\bm v^*_j,\quad i=1,2,\cdots,k.
	\end{equation}
	Therefore, for $j<i$,
	\begin{equation}\label{ajt}
	a_{ji}=\left\langle J^*\bm v^*_i, \bm v^*_j\right\rangle
	=\left\langle \sum_{l=1}^{i-1} (a_{il}+a_{li})\bm v^*_l+a_{ii}\bm v^*_i, \bm v^*_j\right\rangle =a_{ij}+a_{ji}
	\end{equation}
	indicates that the matrix $A_k=(a_{ij})\in\mathbb{R}^{k\times k}$ is upper triangular.
	Since all eigenvalues of $J^*$ are real, we can similarly extend $\{\bm v^*_j\}^k_{j=1}$ to an orthonormal basis $\{\bm v^*_i\}^n_{i=1}$ so that $A=(a_{ij})={V^*}^\top J^*{V^*}$ is an $n$-by-$n$ upper triangular matrix where $V^*=\begin{bmatrix} \bm v_1^*,\cdots,\bm v_n^*\end{bmatrix}$.
	The eigenvalues of $J^*$ are $\{a_{11}, \cdots, a_{nn}\}$, the diagonal of $A$.
	
	With a derivation similar to \cref{VTGxxV}, the eigenvalues of $\mathbb{J}^*_{xx}$ are $-a_{11}, \cdots, -a_{kk}$, $a_{k+1,k+1}, \cdots, a_{nn}$, which are all negative from the linear stability of $(\bm x^*,\bm v_1^*,\cdots,\bm v_k^*)$.
	Therefore, $\bm x^*$ is a $k$-saddle of the dynamics \cref{dynamical}.
	Similarly to \cref{VTGV}, for $i=1,\cdots,k$, from the eigenvalues of $\mathbb{J}^*_{ii}$,
	\begin{equation}\label{eigengii}
	\{-a_{11}-a_{ii}, \cdots, -a_{i-1,i-1}-a_{ii}, -2a_{ii},
	a_{i+1,i+1}-a_{ii}, \cdots,a_{nn}-a_{ii}\},
	\end{equation}
	are all negative, we have $a_{ii}>a_{i+1,i+1}$.
	Therefore, the diagonal elements of $A$ satisfy $a_{11}>\cdots>a_{kk}>0$, and consequently $a_{ii}=\lambda_i$ for $i=1,2,\cdots,k$.
	Furthermore, from \cref{Jvt}, $\{\bm v^*_j\}^i_{j=1}$ is an orthonormal basis of $\mathrm{span}\{\bm w_1, \cdots, \bm w_i\}$ for $i=1,2,\cdots,k$, which completes our proof.
\end{proof}

\emph{Remark:}
In the above theorem, all eigenvalues of $J(\bm x^*)$ are assumed to be real to simplify the derivations.
These assumptions can be relaxed as the eigenvalues $\lambda_{k+1},\cdots,\lambda_n$ have nonzero real parts.
Specifically, assumed that $\lambda_1>\cdots>\lambda_k>\mathrm{Re}\lambda_{k+1}\geqslant\cdots\geqslant\mathrm{Re}\lambda_n$, are nonzero, the proof can be accomplished similarly.

Assumed that the eigenvalues $\lambda_1,\cdots,\lambda_k$ are real, the only difference between GHiSD \cref{HiOSD2} and the HiSD \cref{HiOSD1} results from $J(\bm x)\neq J(\bm x)^\top$.
However, if $J^*$ has a pair of  complex eigenvalues with positive real parts, $(\bm x^*,\bm v_1,\cdots,\bm v_k)$ cannot be a stable stationary point of GHiSD \cref{HiOSD2} for any $(\bm v_1,\cdots,\bm v_k)$.
In spite of the loss of stable convergence of each $\bm v_i$, the subspace $\mathrm{span}\{\bm v_1,\cdots,\bm v_k\}$ can still converge stably to $\mathcal{W}^{\mathrm{u}}(\bm x)$, which also achieves our goal.
Therefore, GHiSD \cref{HiOSD2} can also be applied to search hyperbolic high-index saddle points with complex eigenvalues.

\subsection{Numerical algorithms of GHiSD}
Now we consider the numerical implementation of GHiSD.
The dynamics \cref{HiOSD2v} of GHiSD iterates $\bm v_i$ along $J(\bm x)\bm v_i$ while maintaining the orthonormal condition \cref{orthonormal2}.
Applying an explicit Euler scheme with a sufficiently small step size $\beta>0$, we can establish $\mathcal{W}^{\mathrm{u}}(\bm x)$ via
\begin{equation}\label{v}
\left\{\begin{aligned}
&\tilde{\bm v}^{(m+1)}_i=\bm v^{(m)}_i+\beta J(\bm x) \bm v^{(m)}_i,\qquad i=1,\cdots,k,\\
&\left[\bm v_1^{(m+1)},\cdots,\bm v_k^{(m+1)}\right]= \mathrm{orth}\left(\left[\tilde{\bm v}_1^{(m+1)},\cdots,\tilde{\bm v}_k^{(m+1)}\right]\right).
\end{aligned}\right.
\end{equation}
which can be regarded as a power method of $I+\beta J(\bm x)$ to calculate the eigenvectors of the eigenvalues of $J(\bm x)$ with positive real parts at an exponential rate. 
Here $\mathrm{orth}(\cdot)$ is a normalized orthogonalization function, which can be realized by a Gram--Schmidt procedure.

Since the Jacobian is usually expensive to compute and store in practice, we adopt the dimer method to avoid evaluating the Jacobian explicitly \cite{henkelman1999dimer}.
A dimer centered at $\bm x$ with a direction of $\bm v_i$ and length $2l$ is applied to approximate $J(\bm x)\bm v_i$ in \cref{v}:
\begin{equation}\label{vdimer}
\left\{\begin{aligned}
&\tilde{\bm v}^{(m+1)}_i=\bm v^{(m)}_i+\beta \dfrac{\bm F(\bm x+l\bm v^{(m)}_i) -\bm F(\bm x-l\bm v^{(m)}_i)}{2l} ,\qquad i=1,\cdots,k,\\
&\left[\bm v_1^{(m+1)},\cdots,\bm v_k^{(m+1)}\right]= \mathrm{orth}\left(\left[\tilde{\bm v}_1^{(m+1)},\cdots,\tilde{\bm v}_k^{(m+1)}\right]\right),
\end{aligned}\right.
\end{equation}
which is essentially a central difference for directional derivatives and $l>0$ is a small constant.
Eventually, by applying an explicit Euler scheme to the dynamics \cref{HiOSD2x} of GHiSD, we obtain a numerical algorithm for searching $k$-saddle of non-gradient systems:
\begin{equation}\label{HiOSD3}
\left\{\begin{aligned}
& \bm x^{(m+1)} = \bm x^{(m)}+
\alpha \Big(\bm F(\bm x^{(m)})-2\sum_{j=1}^{k}
\left\langle \bm F(\bm x^{(m)}), \bm v_j^{(m)}\right\rangle \bm v_j^{(m)} \Big), \\
& \tilde{\bm v}_i^{(m+1)} = \bm v_i^{(m)}
+\beta \dfrac{\bm F(\bm x^{(m+1)}+l\bm v^{(m)}_i)-\bm F(\bm x^{(m+1)}-l\bm v^{(m)}_i)}{2l},i=1,\cdots,k, \\
& \left[\bm v_1^{(m+1)},\cdots,\bm v_k^{(m+1)}\right] =
\mathrm{orth}\left(\left[\tilde{\bm v}_1^{(m+1)},\cdots,\tilde{\bm v}_k^{(m+1)}\right]\right),
\end{aligned}\right.
\end{equation}
where $\alpha, \beta>0$ are step sizes. In practice, the discretization scheme \cref{HiOSD3} is implemented with sufficiently small step sizes to ensure numerical stability and convergence. 

\subsection{\label{sec:SL}Construction of solution landscape}
In this subsection, we introduce a systematic numerical procedure to construct the solution landscape of a dynamical system.
This procedure consists of a downward search algorithm and an upward search algorithm, which is a generalization of constructing the pathway map \cite{yin2020construction}.

The downward search is the core procedure to search stationary points starting from a high-index saddle.
Given a $k$-saddle $\hat{\bm x}$, let $\hat{\bm v}_1,\cdots,\hat{\bm v}_k$ denote the orthonormal basis of $\mathcal{W}^{\mathrm{u}}(\hat{\bm x})$ in \Cref{them1}.
We choose a direction $\hat{\bm v}_i$ from the unstable directions $\{\hat{\bm v}_1,\cdots,\hat{\bm v}_k\}$, and slightly perturb the parent state $\hat{\bm x}$ along this direction.
An $m$-GHiSD $(m<k)$ is started from the point $\hat{\bm x}\pm \epsilon \hat{\bm v}_i$ and the $m$ initial directions from the unstable directions need to exclude $\hat{\bm v}_i$.
A typical choice for $m$-GHiSD in a downward search is $(\hat{\bm x} \pm \epsilon \hat{\bm v}_{m+1}, \hat{\bm v}_1,\cdots, \hat{\bm v}_m)$.
This procedure is repeated to newly-found saddles until sinks are found.
The downward search algorithm is presented in detail as \Cref{alg:downward}.

\begin{algorithm}[ht]\caption{Downward search}\label{alg:downward}
	\begin{algorithmic}[1]
		\REQUIRE{$\bm F\in\mathcal{C}^2(\mathbb{R}^n, \mathbb{R}^n)$, a $\hat{k}$-saddle $\hat{\bm x}$ of $\bm F$, $\epsilon>0$.}
		\STATE{Calculate an orthonormal basis $\{\hat{\bm v}_1,\cdots,\hat{\bm v}_{\hat{k}}\}$ of $\mathcal{W}^{\mathrm{u}}(\hat{\bm x})$ using \cref{vdimer};}
		\STATE{Set the queue $\mathcal{A}= \{(\hat{\bm x},\hat{k}-1,\{\hat{\bm v}_1,\cdots,\hat{\bm v}_{\hat{k}}\})\}$,
			the solution set $\mathcal{S}=\{\hat{\bm x}\}$, and the relation set $\mathcal{R}=\varnothing$;}
		\WHILE{$\mathcal{A}$ is not empty}
		\STATE{Pop $(\bm x,m,\{\bm v_1,\cdots,\bm v_k\})$ from $\mathcal{A}$;}
		\STATE{Push $(\bm x,m-1,\{\bm v_1,\cdots,\bm v_k\})$ into $\mathcal{A}$ if $m\geqslant1$;}
		\FOR{$i=1,\cdots,k$}
		\STATE{Determine the initial directions: $\{\bm v_j: j=1, \cdots, m+1, j\neq \min(i,m+1)\}$;}
		\IF{$m$-GHiSD from $\bm x\pm\epsilon \bm v_{i}$ converges to $(\tilde{\bm x}, \tilde{\bm v}_1, \cdots, \tilde{\bm v}_m)$}
		\STATE{$\mathcal{R}\leftarrow\mathcal{R} \cup \{(\bm x,\tilde{\bm x})\}$;}
		\IF{$\tilde{\bm x} \notin \mathcal{S}$}
		\STATE{$\mathcal{S}\leftarrow\mathcal{S} \cup \{\tilde{\bm x}\}$;}
		\STATE{Push $(\tilde{\bm x},m-1,\{\tilde{\bm v}_1,\cdots,\tilde{\bm v}_{m}\})$ into $\mathcal{A}$ if $m \geqslant 1$;}
		\ENDIF
		\ENDIF
		\ENDFOR
		\ENDWHILE
		\ENSURE{The solution set $\mathcal{S}$ and the relation set $\mathcal{R}$.}
	\end{algorithmic}
\end{algorithm}

The downward search algorithm drives a systematic search of stationary points starting from a given parent state in a controlled procedure.
On the other hand, if the parent state is unknown or multiple parent states exist, an upward search algorithm is adopted to search high-index saddles from a sink (or a saddle).
For an upward search, more directions $\hat{\bm v}_1,\cdots,\hat{\bm v}_{K}$ at $\hat{\bm x}$ need to be computed using \cref{vdimer}, where $K>k$ is the highest index of the saddle to search.
Similarly, we choose a direction $\hat{\bm v}_i$ from the stable directions $\{\hat{\bm v}_{k+1},\cdots,\hat{\bm v}_{K}\}$, and start $m$-GHiSD from $\hat{\bm x}\pm \epsilon \hat{\bm v}_i$, where the $m$ initial directions should include $\hat{v}_i$.
A typical choice for $m$-GHiSD $(m>k)$ in an upward search is $(\hat{\bm x} \pm \epsilon \hat{\bm v}_{m}, \hat{\bm v}_1,\cdots, \hat{\bm v}_m)$.
\Cref{alg:upward} presents the upward search algorithm.

\begin{algorithm}[ht]\caption{Upward search}\label{alg:upward}
	\begin{algorithmic}[1]
		\REQUIRE{$\bm F\in\mathcal{C}^2(\mathbb{R}^n, \mathbb{R}^n)$, a $\hat{k}$-saddle $\hat{\bm x}$ of $\bm F$, $\epsilon>0$, the highest searching index $K$.}
		\STATE{Calculate $K$ orthonormal directions $\{\bm v_1,\cdots,\bm v_{K}\}$ at $\bm x$ using \cref{vdimer};}
		\STATE{Set the stack $\mathcal{A}= \{(\hat{\bm x},\hat{k}+1,\{\hat{\bm v}_1,\cdots,\hat{\bm v}_{K}\})\}$ and the solution set $\mathcal{S}=\{\hat{\bm x}\}$,}
		\WHILE{$\mathcal{A}$ is not empty}
		\STATE{Pop $(\bm x,m,\{\bm v_1,\cdots,\bm v_K\})$ from $\mathcal{A}$;}
		\STATE{Push $(\bm x,m+1,\{\bm v_1,\cdots,\bm v_K\})$ into $\mathcal{A}$ if $m<K$;}
		\IF{$m$-GHiSD from $\bm x\pm \epsilon \bm v_m$ converges to $(\tilde{\bm x},\tilde{\bm v}_1,\cdots,\tilde{\bm v}_{m})$}
		\IF{$\tilde{\bm x} \notin \mathcal{S}$}
		\STATE{Calculate an orthonormal basis $\{\tilde{\bm v}_1,\cdots,\tilde{\bm v}_{\tilde{K}}\}$ of $\mathcal{W}^{\mathrm{u}}(\tilde{\bm x})$;}
		\STATE{$\mathcal{S}\leftarrow\mathcal{S} \cup \{\tilde{\bm x}\}$;}
		\STATE{Push $(\tilde{\bm x},m+1,\{\tilde{\bm v}_1,\cdots,\tilde{\bm v}_K\})$ into $\mathcal{A}$ if $m<K$;}
		\ENDIF
		\ENDIF
		\ENDWHILE
		\ENSURE{The solution set $\mathcal{S}$.}
	\end{algorithmic}
\end{algorithm}

By using a combination of downward searches and upward searches, the entire search can navigate up and down systematically to find the complete solution landscape, as long as the saddle points are connected somewhere.

\section{\label{sec:exam}Numerical examples}
\subsection{Three-dimensional example}
We first study a simple three-dimensional (3D) dynamical system to illustrate the solution landscape:
\begin{equation}\label{3d}
\dot{\bm x}=
-\begin{bmatrix}0.6 & 0.1 & 0 \\-0.1 & 0.6 & -0.05 \\0 & -0.1 & 0.6\end{bmatrix}\bm x
+5\begin{bmatrix}(1+(x_1-5)^2)^{-1}\\ (1+(x_2-5)^2)^{-1}\\ (1+(x_3-5)^2)^{-1}\end{bmatrix},
\quad \bm x=\begin{bmatrix}x_1\\ x_2\\ x_3\end{bmatrix}\in\mathbb{R}^3.
\end{equation}
All stationary points of \cref{3d} are listed in \cref{table:tag} and the complete solution landscape is shown in \cref{fig2}.
To search the solution landscape, we first find the source $a_1$ using 3-GHiSD, which is the inverse dynamics of \cref{3d}.
Then we apply the downward search algorithm to construct the complete solution landscape, as shown in \cref{fig2}(B).
\cref{fig2}(A) shows a 3D diagram of the solution landscape, which cannot be visualized in a high-dimensional space.

\begin{table}[h]
	\centering
	\begin{tabular}{cc|cc}
		\hline \hline
		Tag    & Coordinates               & Tag    & Coordinates \\ \hline
		\cellcolor{myca} $a_1$  & $(4.1198,3.4539,3.7131)$  & \cellcolor{mycc} $c_7$  & $(-0.3491,3.7831,5.7853)$  \\ \hline
		\cellcolor{mycb} $b_1$  & $(4.0355,1.6896,3.8422)$ &  \cellcolor{mycc} $c_8$  & $(5.5292,5.8847,3.4660)$ \\ \hline
		\cellcolor{mycb} $b_2$  & $(-0.3626,3.8561,3.6793)$  &  \cellcolor{mycc} $c_9$  & $(5.5930,3.4322,1.0816)$   \\ \hline
		\cellcolor{mycb} $b_3$  & $(5.5995,3.1849,3.7347)$   & \cellcolor{mycc} $c_{10}$  & $(4.2222,5.8207,1.6531)$ \\ \hline
		\cellcolor{mycb} $b_4$  & $(4.2233,5.8467,3.4710)$ &  \cellcolor{mycc} $c_{11}$  & $(4.2247,5.8813,5.8445)$ \\ \hline
		\cellcolor{mycb} $b_5$  & $(4.1265,3.6022,1.1193)$  & \cellcolor{mycd} $d_1$  & $(0.2790,0.4730,0.4653)$  \\ \hline
		\cellcolor{mycb} $b_6$  & $(4.1123,3.2900,5.7716)$   & \cellcolor{mycd} $d_2$  & $(5.6382,1.7000,0.7135)$  \\ \hline
		\cellcolor{mycc} $c_1$  & $(0.2136,0.8094,3.8979)$ &  \cellcolor{mycd} $d_3$  & $(0.1779,0.9943,5.7094)$ \\ \hline
		\cellcolor{mycc} $c_2$  & $(5.6253,2.1940,3.8080)$  & \cellcolor{mycd} $d_4$  & $(-0.6987,5.6858,1.6174)$  \\ \hline
		\cellcolor{mycc} $c_3$  & $(4.0148,1.2843,0.6284)$  & \cellcolor{mycd} $d_5$  & $(-0.7089,5.7422,5.8405)$  \\ \hline
		\cellcolor{mycc} $c_4$  & $(4.0496,1.9728,5.7357)$ & \cellcolor{mycd} $d_6$  & $(5.5299,5.8584,1.6631)$ \\ \hline
		\cellcolor{mycc} $c_5$  & $(-0.7032,5.7106,3.4882)$  & \cellcolor{mycd} $d_7$  & $(5.5283,5.9203,5.8456)$  \\ \hline
		\cellcolor{mycc} $c_6$  & $(-0.3769,3.9328,1.1935)$  & & \\ \hline \hline
	\end{tabular}
	\caption{The stationary points of the 3D example.
		Both the tag and the color of each point specify its index: $a$ for a source, $b$ for a 2-saddle, $c$ for a 1-saddle, and $d$ for a sink.}
	\label{table:tag}
\end{table}
\begin{figure}[h]
	\centering
	\includegraphics[width=1\columnwidth]{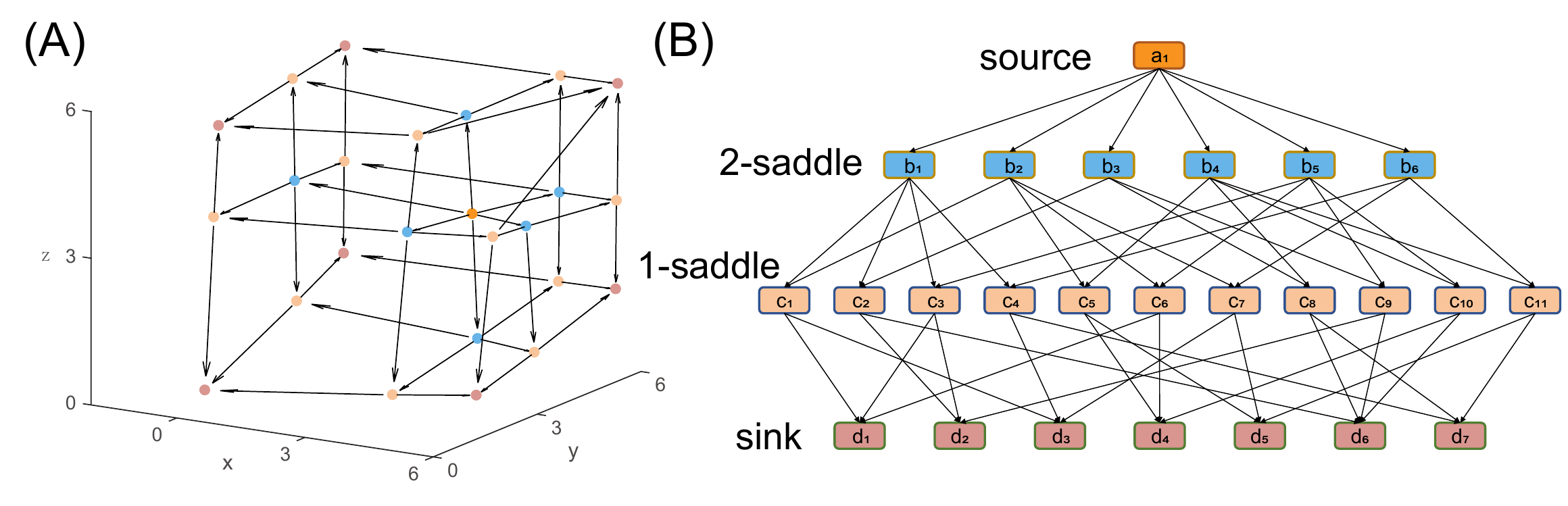}\\
	\caption{(A) A 3D diagram of the solution landscape.
		(B) The solution landscape of the 3D example.
		Each node represents a stationary point, and each arrow represents a dynamical pathway of GHiSD.}
	\label{fig2}
\end{figure}

The solution landscape not only shows the relationships between different sinks, but also reveals rich information on the pathways of the dynamical system.
For example, $d_1$ and $d_2$ are two neighbour sinks connected by the 1-saddle $c_3$.
However, $d_1$ and $d_7$ are not neighbours, and the transition pathway from $d_1$ to $d_7$ needs to pass a metastable state $d_3$ through the pathway sequence $d_1 \rightarrow c_{1} \rightarrow d_3 \rightarrow c_{4} \rightarrow d_7$.
In the solution landscape, it is easy to find that $d_1$ and $d_7$ are connected by a 2-saddle $b_1$, and we can choose a pathway sequence $d_1 \rightarrow c_1 \rightarrow b_1 \rightarrow c_{4} \rightarrow d_7$ to avoid dropping into a sink.

\subsection{Phase field model}
We consider the phase field model as the second numerical example.
The phase field models have been widely employed to investigate nucleation and microstructure evolution in phase transformations \cite{chen2002phase, han2020pathways, zhang2007morphology, zhang2010simultaneous, zhang2016recent}.
Here we consider a phase field model of the order parameter $\phi$ in a fixed square domain $\Omega=[0,1]^2$ with the periodic boundary condition.
The Ginzburg--Landau free energy is
\begin{equation}\label{GLenergy}
E(\phi)=\int_{\Omega}\left(\dfrac{\kappa}{2}|\nabla \phi|^2+\frac{1}{4}(1-\phi^2)^2 \right) \mathrm{d}\bm x,
\end{equation}
where $\kappa>0$ is the gradient-energy coefficient for isotropic interfacial energy, which is a critical parameter that determines the solution landscape.
A finite difference scheme of mesh grids $64\times 64$ is applied to spatially discretize $\phi$ in following numerical simulations. We have tested that the solution landscapes, including solutions and their indices, remain the same under the mesh refinements.

\subsubsection{Gradient system}
We first consider the Allen--Cahn equation of \cref{GLenergy},
\begin{equation}\label{AC}
\dot \phi =-\dfrac{\delta E}{\delta \phi}=\kappa\Delta\phi+\phi-\phi^3,
\end{equation}
which is a gradient system.
For any $\kappa>0$, three homogeneous phases, namely, $\phi_{1}\equiv1$, $\phi_{0}\equiv0$, and $\phi_{-1}\equiv-1$, remain to be stationary points of \cref{AC}.
By analyzing the spectral set of the Hessian $\nabla^2 E(\phi)=3\phi^2-1-\kappa\Delta$, both $\phi_{1}$ and $\phi_{-1}$ are minima of \cref{GLenergy}, while the index of $\phi_{0}$ increases as $\kappa$ decreases to zero, as shown in \cref{table:index}.

\begin{table}[H]
	\centering
	\begin{tabular}{c|ccccc}
		\hline
		$1/\kappa$ & $(0,4\pi^2)$& $(4\pi^2,8\pi^2)$& $(8\pi^2,16\pi^2)$& $(16\pi^2,20\pi^2)$&  $(20\pi^2,+\infty)$  \\  \hline
		index of $\phi_0$ & 1 & 5 & 9 & 13 & $\geqslant$ 21 \\
		\hline
	\end{tabular}
	\caption{Index of $\phi_0$ at different $\kappa$ for the Ginzburg--Landau free energy.}
	\label{table:index}
\end{table}

Starting from the stationary point $\phi_0$, we can obtain multiple stationary points by \cref{alg:downward}.
The solution landscapes at some different $\kappa$ are shown in \cref{fig:kappa}.
Each small image denotes a stationary point of the dynamical system \cref{AC}, and each solid arrow represents a GHiSD (HiSD in this case) pathway.
In each subfigure, an upper state has a higher energy than a lower state.
Furthermore, if $\hat \phi$ is a stationary point of \cref{AC}, so is $-\hat\phi$.
Because of the periodic boundary condition, a translation of a nonhomogeneous stationary point is also a stationary point, so only one phase is shown in the solution landscape for simplicity.
For the simplest case of $\kappa>(4\pi^{2})^{-1}\approx0.0253$, $\phi_{0}$ is a $1$-saddle connecting $\phi_{1}$ and $\phi_{-1}$, as shown in \cref{fig:kappa}(A).

\begin{figure}[h]
	\centering
	\includegraphics[width=0.95\linewidth]{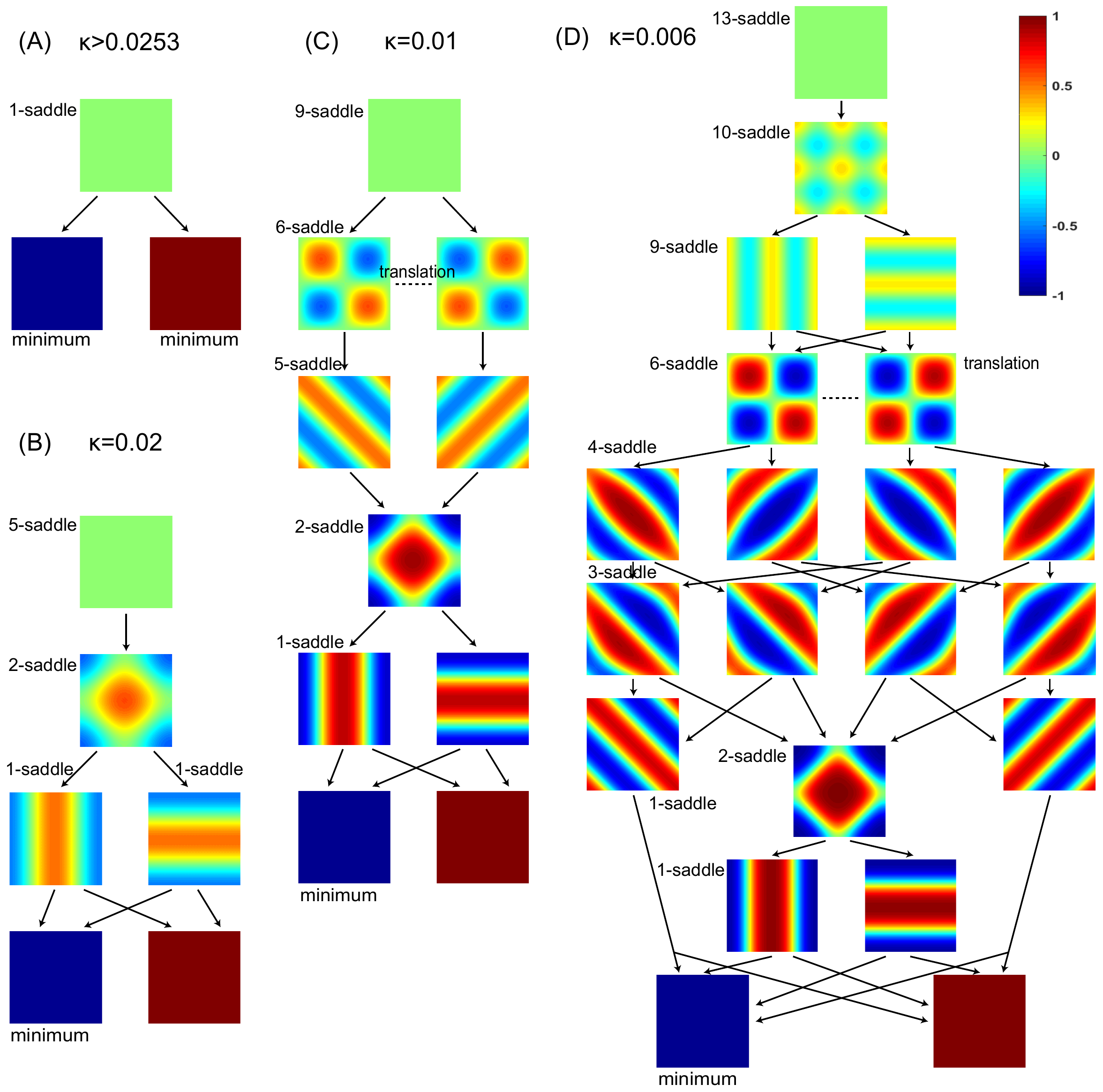}
	\caption{Solution landscapes for the phase field model at different $\kappa$.
		(A) $\kappa>0.0253$.
		(B) $\kappa=0.02$.
		(C) $\kappa=0.01$.
		(D) $\kappa=0.006$. }
	\label{fig:kappa}
\end{figure}

At $\kappa=0.02$, $\phi_0$ becomes a 5-saddle.
The 1-saddles between $\phi_{1}$ and $\phi_{-1}$ are two kinds of lamellar phases, horizontal and vertical respectively (related by a $\pi/2$ rotation), and the two 1-saddles are further connected to a 2-saddle.
The periodic boundary condition implies that the Hessian at a nonhomogeneous state has one or two zero eigenvalues.
Specifically, the Hessian at the 1-saddle has one zero eigenvalues, while the Hessian at the 2-saddle has two.
In \cref{fig:kappa}(B), only one representative stationary point of each state is shown in the solution landscape.
The emergence of zero eigenvalues also explains why no 3-saddles or 4-saddles are found in this system.

As $\kappa$ decreases, more stationary points emerge in this system and the solution landscape becomes more complicated.
When $\kappa$ decreases to $0.01$, $\phi_0$ becomes a 9-saddle, and the solution landscape is shown in \cref{fig:kappa}(C).
In order to visualize solution landscapes better, the dashed line represents stationary points related by a translation.
At $\kappa=0.006$, $\phi_{0}$ becomes a $13$-saddle.
Besides the two kinds of lamellar phases along axes, there are another two 1-saddles between $\phi_1$ and $\phi_{-1}$, which are lamellar phases along $x=y$ and $x=-y$ respectively.
These 1-saddles are bifurcated from the 5-saddles at $\kappa=0.02$ via several pitchfork bifurcations, and  may be ignored with traditional methods because of their relatively high energy.
With the help of the solution landscape, these stationary points can be easily obtained and clearly illustrated.

\subsubsection{Non-gradient system}
By adding a shear flow to the dynamics \cref{AC}, we can obtain a non-gradient dynamical system of the phase field model.
We consider a shear flow of a pushing force along the $x$-axis, as shown in \cref{fig:SF}.
In the presence of the shear flow, the corresponding non-gradient dynamics is
\begin{equation}\label{SF}
\dot \phi =\kappa\Delta\phi+\phi-\phi^3+\gamma \sin(2\pi y) \;\partial_x\phi,
\end{equation}
where $\gamma$ is the shear rate \cite{gu2018simplified, heymann2008pathways}.
The dynamics \cref{SF} also preserves the symmetry of $\phi \to - \phi$.
Because of the shear flow, a translation along the $x$-axis of a stationary point remains a stationary point, so Hessians at nonhomogeneous (along the $x$-axis) stationary points have one zero eigenvalue and only one phase is shown in the solution landscape in most cases as well.
With the parameter $\kappa$ fixed as $0.01$ to compare with \cref{fig:kappa}(C), we increase the shear rate $\gamma$ from zero gradually to obtain different solution landscapes.

\begin{figure}[h]
	\centering
	\includegraphics[width=0.3\linewidth]{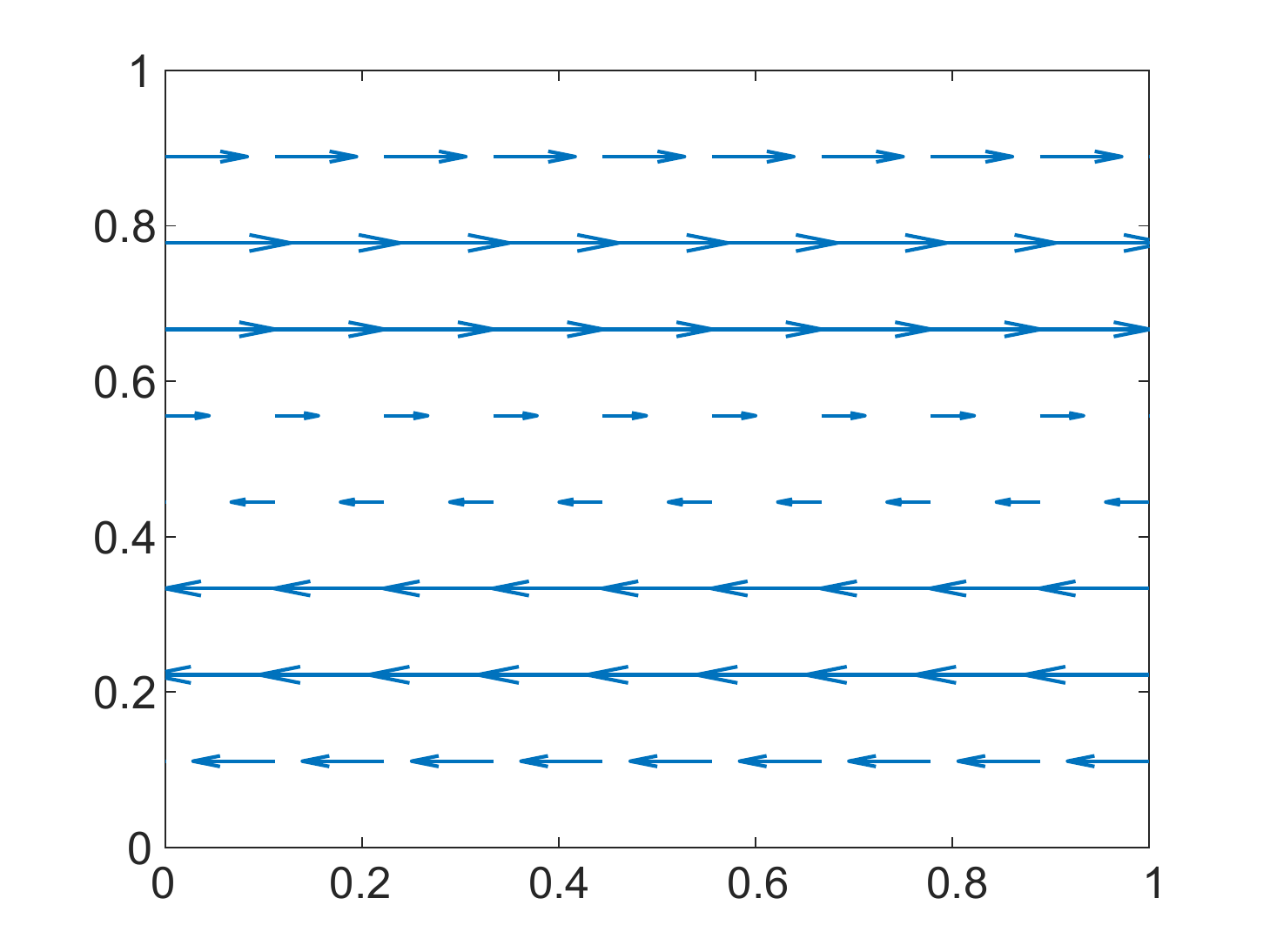}
	\caption{An illustration of the shear flow.}
	\label{fig:SF}
\end{figure}

For any $\gamma>0$, the three homogeneous states and the horizontal lamellar phase remain stationary points of \cref{SF}.
Starting from the stationary point $\phi_0$, we can obtain multiple stationary points by \Cref{alg:downward}, and the solution landscapes at different $\gamma$ are presented in \cref{fig:gamma}.
At $\gamma=0.02$, more stationary points emerge in this system, as shown in \cref{fig:gamma}(A).
The appearance of the 3-saddles and 7-saddles can be explained via pitchfork bifurcations from the 2-saddle and the 6-saddle at $\gamma=0$ in \cref{fig:kappa}(C).
Besides, the two $5$-saddles (related by a $\pi/2$ rotation) and a $1$-saddle (the vertical lamellar phase) are twisted.
When $\gamma$ increases to $0.04$, the 6-saddles at $\gamma=0.02$ vanish first because a pitchfork bifurcation takes place between the $6$-saddle and $5$-saddles as illustrated in \cref{fig:bifurcations}(A).

\begin{figure}
	\centering
	\includegraphics[width=0.95\linewidth]{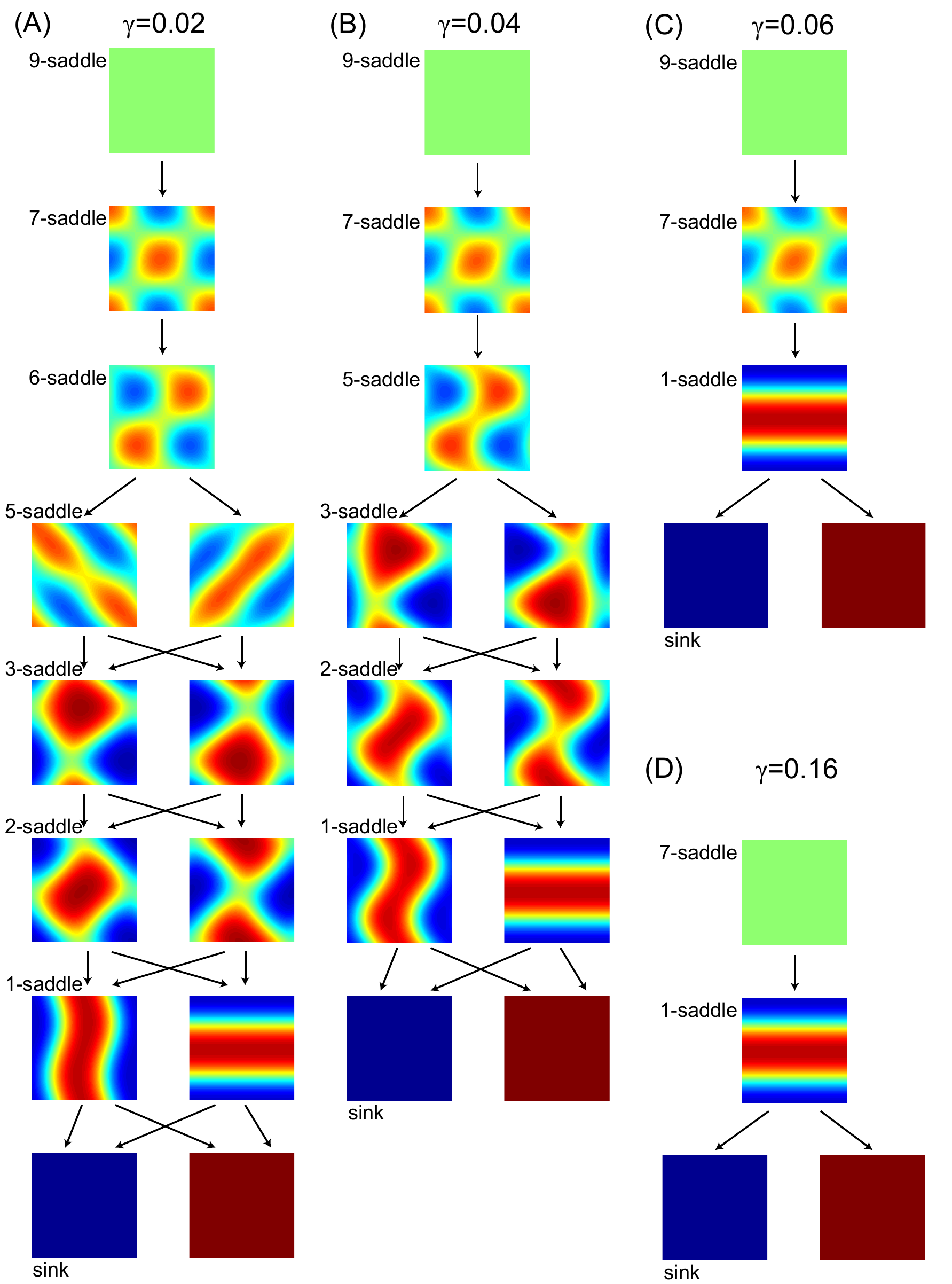}
	\caption{Solution landscapes for the phase field model at $\kappa=0.01$ with different shear rates.
		(A) $\gamma=0.02$.
		(B) $\gamma=0.04$.
		(C) $\gamma=0.06$.
		(D) $\gamma=0.16$.}
	\label{fig:gamma}
\end{figure}

\begin{figure}[h]
	\centering
	\includegraphics[width=1\linewidth]{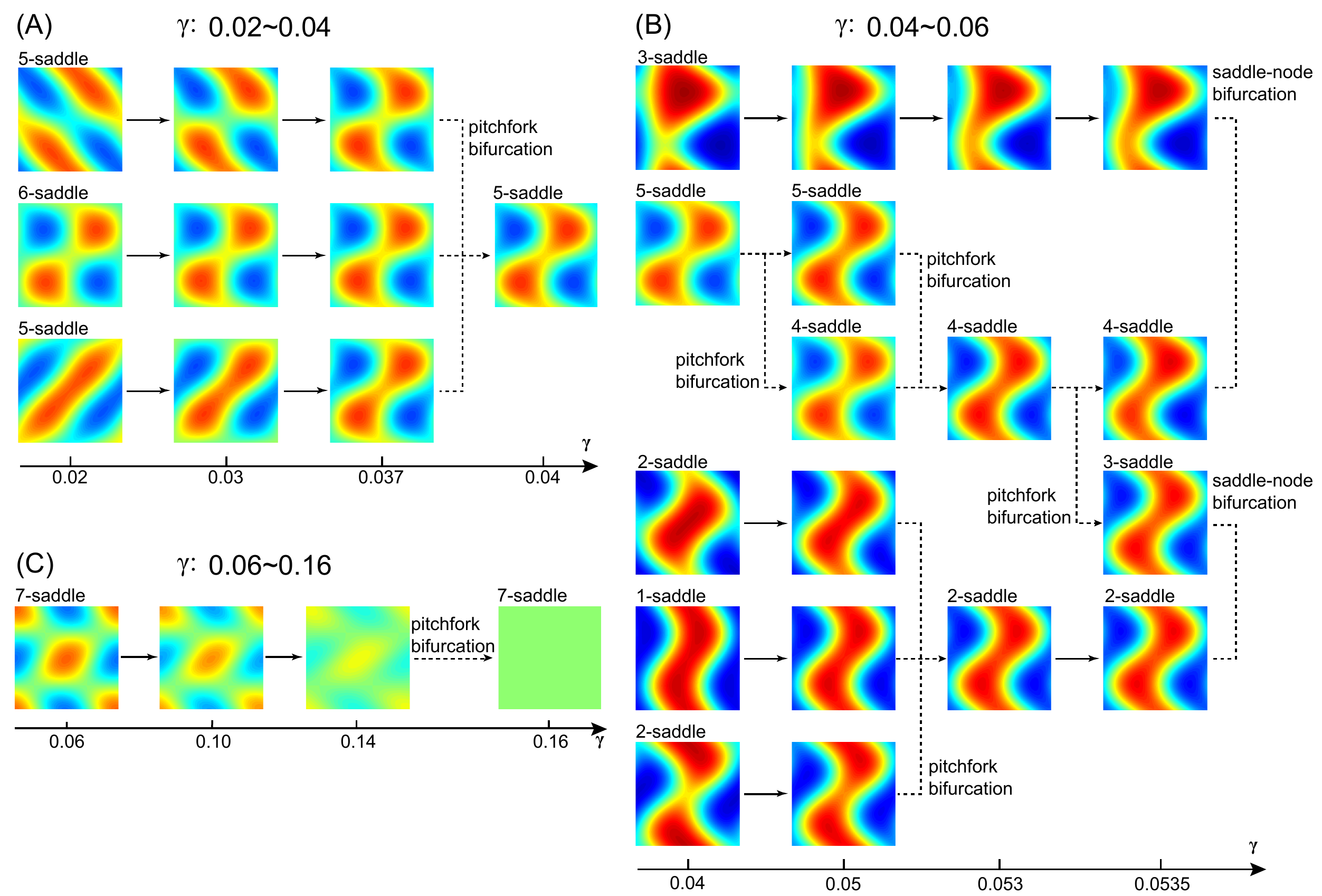}
	\caption{Bifurcations for the phase field model at $\kappa=0.01$ with increasing shear rates.
		(A) $0.02 \le \gamma \le 0.04$.
		(B) $0.04 \le \gamma \le 0.06$.
		(C) $0.06 \le \gamma \le 0.16$.}
	\label{fig:bifurcations}
\end{figure}

When $\gamma$ becomes larger, the solution landscape is simplified from the vanishing of multiple stationary points.
The twisted $1$-saddle no longer exists at $\gamma=0.06$, which has also been discovered by simplified GAD \cite{gu2018simplified}.
This can be well explained with a pitchfork bifurcation between the twisted $1$-saddle and $2$-saddles as illustrated in \cref{fig:bifurcations}(B).
It is worth pointing out that the two lamellar phases go through different and separate changes as $\gamma$ increases.
Furthermore, three other pitchfork bifurcations and two saddle-node bifurcations which occur in $\gamma\in(0.04, 0.06)$ are illustrated in \cref{fig:bifurcations}(B) as well.
As $\gamma$ increases to $0.16$, the shear flow becomes dominant and the solution landscape is shown in \cref{fig:gamma}(D).
The 7-saddles at $\gamma=0.06$ vanish via a pitchfork bifurcation which turns $\phi_0$ into a 7-saddle, as illustrated in \cref{fig:bifurcations}(C).
Thus, all the remaining stationary points at $\gamma=0.16$ are homogeneous along the $x$-axis.

\section{Conclusions and discussions}\label{sec:conclusions}
A long-standing and fundamental problem in computational mathematics and physics is how to find the family tree of all possible stationary points.
In this paper, we introduce the GHiSD method to search the solution landscape for dynamical systems.
The concept of the solution landscape describes the pathway map that starts with a parent state and then relates the entire family completely down to the minima (sinks).
It not only guides our understanding of the relationships between the minima and the transition states on the energy landscape, but also provides a full picture of the entire family of stationary points in dynamical systems.

The GHiSD method is a generalization of the HiOSD method for gradient systems.
It is formulated under the framework of dynamical systems and applicable to search high-index saddle points in both gradient systems and non-gradient systems.
With the GHiSD method, we can efficiently construct the solution landscape using the downward search and upward search algorithms to find multiple possible pathways.
We use a 3D system and the phase field model with a shear flow as numerical examples to illustrate the solution landscapes in dynamical systems.

Advantages of the proposed method are tremendous.
First, it overcomes the difficulty of tuning initial guesses to search the stationary points of dynamical systems required in many other existing methods.
Letting the system gently roll off the high-index saddle along its unstable directions, we have an efficient and controlled procedure to find connected lower-index saddles and sinks with the help of GHiSD.
Second, our method offers a general mechanism for finding all possible sinks (hence the global minima for energy function) without limitations on system types, as long as the system has a finite number of stationary points.
Third, the solution landscape not only identifies the stationary points, but also shows the relationships between all stationary points through the pathway map.
More importantly, this hierarchy structure of stationary points reveals rich and hidden physical properties and processes, leading to a deep understanding of physical systems.

Nevertheless, there are several open questions for the solution landscape.
The parent state is the highest-index saddle and plays a critical role in the solution landscape, but how do we find the parent state in the solution landscape in general?
Apparently the parent state is not unique in a finite-dimensional system with multiple local maxima, but it may be unique in an infinite-dimensional system.
An observation is that the unique parent state often has a better symmetry.
For example, the parent state is the homogeneous state $\phi_0$ in the phase field model and the well order-reconstruction solution in the Landau--de Gennes free energy in a square box \cite{yin2020construction}.
Moreover, can limit cycles, which are very important in the study of dynamical systems, be identified under the framework of GHiSD?
How do we construct the solution landscape in a constrained manifold?
Can the evolution of the solution landscape be explained and predicted with the bifurcation theory?
Both theoretical and numerical investigations of these questions will be of great use.

The concept of the solution landscape can be a critical key for many mathematical problems raised from physics, chemistry and biology.
All applications of the energy landscape may be solved by the solution landscape.
For example, the energy landscape of protein folding has been widely studied in decades \cite{leeson2000protein, mallamace2016energy, onuchic1997theory, wales2003energy}.
One unsolved puzzle is ``folding mechanism''.
Cosmological timescales are required for each protein to find the folded configuration in existing models.
However, naturally occurring proteins fold in milliseconds to seconds.
This paradox of protein folding may be solved by the solution landscape, which is able to capture dynamical pathways on the complicated energy landscape of protein folding.

\bibliographystyle{acm}
\bibliography{GHiSDbib}

\end{document}